\DeclareMathOperator{\sgn}{\mathrm{sgn}}
\begin{document}
 \bibliographystyle{plain}

 \newtheorem{theorem}{Theorem}
 \newtheorem{lemma}{Lemma}
 \newtheorem{corollary}{Corollary}
 \newcommand{\mc}{\mathcal}
 \newcommand{\rar}{\rightarrow}
 \newcommand{\Rar}{\Rightarrow}
 \newcommand{\lar}{\leftarrow}
 \newcommand{\mbb}{\mathbb}
 \newcommand{\A}{\mc{A}}
 \newcommand{\B}{\mc{B}}
 \newcommand{\cc}{\mc{C}}
 \newcommand{\D}{\mc{D}}
 \newcommand{\E}{\mc{E}}
 \newcommand{\F}{\mc{F}}
 \newcommand{\G}{\mc{G}}
 \newcommand{\HH}{\mc{H}}
 \newcommand{\I}{\mc{I}}
 \newcommand{\J}{\mc{J}}
 \newcommand{\M}{\mc{M}}
 \newcommand{\nn}{\mc{N}}
 \newcommand{\qq}{\mc{Q}}
 \newcommand{\U}{\mc{U}}
 \newcommand{\X}{\mc{X}}
 \newcommand{\Y}{\mc{Y}}
 \newcommand{\C}{\mathbb{C}}
 \newcommand{\R}{\mathbb{R}}
 \newcommand{\N}{\mathbb{N}}
 \newcommand{\Q}{\mathbb{Q}}
 \newcommand{\Z}{\mathbb{Z}}
 \newcommand{\lf}{\left\lfloor}
 \newcommand{\rf}{\right\rfloor}
 \newcommand{\dx}{\text{\rm d}x}
 \newcommand{\dy}{\text{\rm d}y}
 \newcommand{\SL}{\mathrm{SL}}

\parskip=0.5ex

\title[Inhomogeneous approximation by coprime integers]{Inhomogeneous approximation\\ by coprime integers}
\author{Alan~Haynes}
\subjclass[2010]{11J20}
\thanks{Research supported by EPSRC grant EP/J00149X/1.}
\address{School of Mathematics, University of Bristol, Bristol UK }
\email{alan.haynes@bristol.ac.uk}

\allowdisplaybreaks


\begin{abstract}
This paper addresses a problem recently raised by Laurent and Nogueira about inhomogeneous Diophantine approximation with coprime integers. A corollary of our main theorem is that for any irrational $\alpha\in\R$ and for any $\gamma\in\R$ and $\epsilon>0$ there are infinitely many pairs of coprime integers $m,n$ such that
\begin{equation*}
|n\alpha-m-\gamma |\le 1/|n|^{1-\epsilon}.
\end{equation*}
This improves upon previously known results, in which the exponent of approximation was at best $1/2$.
\end{abstract}


\maketitle

\section{Introduction}
Dirichlet's theorem in Diophantine approximation guarantees that for any irrational $\alpha\in\R$ there are infinitely many $m,n\in\Z$ for which
\begin{equation*}
\left|n\alpha-m\right|\le\frac{1}{|n|}.
\end{equation*}
The inhomogeneous version of this result, proved by Minkowski (see \cite[Theorem IV.9.1]{RockettSzusz}), is that for any irrational $\alpha\in\R$ and for any $\gamma\in\R\setminus (\alpha\Z+\Z)$ there are infinitely many $m,n\in\Z$ for which
\begin{equation*}
\left|n\alpha-m-\gamma\right|\le\frac{1}{4|n|}.
\end{equation*}
In this paper we address the problem of obtaining analogous results with $m$ and $n$ coprime. In the homogeneous case there is little need to pause for thought, since any common factors can be dispensed with immediately without significantly changing the problem. However in the inhomogeneous case the situation is more delicate. The best known analogue of Dirichlet's theorem in this general setting is a recent result of Laurent and Nogueira, who proved in \cite{LaurentNoguiera} that for any irrational $\alpha\in\R$ and for any $\gamma\in\R$, there are infinitely many pairs of coprime integers $m$ and $n$ such that
\[\left|n\alpha-m-\gamma\right|\le\frac{c}{|n|^{1/2}},\]
where $c$ is a constant depending only on $\alpha$ and $\gamma$. Their proof relies on estimates for the density of orbits of points in $\R^2$ under the action of $\SL_2(\Z)$. In this paper, using a different approach, we obtain the following result.
\begin{theorem}\label{thm:main}
Let $c>2\sqrt{\log 2}$. For any irrational $\alpha\in\R$ and for any $\gamma\in\R$ there are infinitely many pairs of coprime integers $m,n$ such that
\begin{equation}\label{eqn:mainthm}
\left|n\alpha-m-\gamma\right|\le\frac{\exp (c\sqrt{\log |n|})}{|n|}.
\end{equation}
\end{theorem}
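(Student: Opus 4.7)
The starting point is Minkowski's inhomogeneous Dirichlet theorem cited in the introduction: for every integer $N\ge 1$ there exist $m,n\in\Z$ with $1\le n\le N$ and $|n\alpha-m-\gamma|\le 1/N$. If infinitely many of these Minkowski pairs happened to be coprime then~\eqref{eqn:mainthm} would already hold with no loss factor at all, so we may assume that for every large $N$ the Minkowski pair shares a common factor, and we try to repair coprimality by perturbing $(m,n)$ by a short $\{0,1\}$-combination of homogeneous convergents $p_k/q_k$ of $\alpha$.

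Fix a parameter $L$, to be optimised at the end, and choose indices $k_1<k_2<\cdots<k_L$ with $q_{k_1}\ge N$, $q_{k_{j+1}}/q_{k_j}\ge r$ for some fixed growth rate $r>1$, and $\sum_{j}|q_{k_j}\alpha-p_{k_j}|=O(1/N)$ (using that $|q_k\alpha-p_k|<1/q_{k+1}$). For each $\epsilon\in\{0,1\}^L$ define
\[
(m_\epsilon,n_\epsilon)=\Bigl(m+\sum_{j=1}^L\epsilon_j p_{k_j},\ n+\sum_{j=1}^L\epsilon_j q_{k_j}\Bigr).
\]
By the triangle inequality each of the $2^L$ candidates satisfies $|n_\epsilon\alpha-m_\epsilon-\gamma|=O(1/N)$, and $|n_\epsilon|\le n+\sum_j q_{k_j}=O(q_{k_L})$. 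The core step is then a sieve showing that at least one candidate is coprime: for each prime $p$ the event $p\mid\gcd(m_\epsilon,n_\epsilon)$ is cut out by two affine-linear conditions on $\epsilon$ over $\mathbb{F}_p$, and because $p_kq_{k+1}-p_{k+1}q_k=\pm 1$ consecutive convergents cannot vanish simultaneously mod $p$; one therefore expects at most $O(2^L/p^2)$ bad $\epsilon$ per moderate prime $p$, and a union bound over primes then leaves at least one coprime candidate once $L$ is taken large enough.

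Balancing the loss $|n_\epsilon|\cdot|n_\epsilon\alpha-m_\epsilon-\gamma|=O(q_{k_L}/N)=O(r^L)$ against $\log|n_\epsilon|\asymp L\log r+\log N$ produces the shape $\exp(O(\sqrt{\log|n_\epsilon|}))$ exactly when $L\asymp\sqrt{\log N}/\log r$. The main obstacle will be making the sieve quantitative enough: the $O(2^L/p^2)$ bound relies on sufficient equidistribution of the sums $\sum\epsilon_j q_{k_j}$ and $\sum\epsilon_j p_{k_j}$ modulo small primes, which must be forced by arranging the $k_j$ to realise a variety of residues; medium-sized primes $p$ with $2^L<p\le|n_\epsilon|$ require a separate argument using the fact that each integer $|n_\epsilon|$ has only $O(\log|n_\epsilon|)$ large prime divisors. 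The specific constant $2\sqrt{\log 2}$ in~\eqref{eqn:mainthm} presumably emerges from optimising this balance at an extremal choice of $r$ and $L$ (with $r\to 2$ compatible with the sieve), and pinning down that extremal analysis is the technical heart of the proof.
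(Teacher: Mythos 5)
Your plan has two genuine gaps, and the second one is structural rather than merely technical. First, the size/error balance fails for general $\alpha$: you need the loss $|n_\epsilon|/N\asymp q_{k_L}/N$ to be $O(r^L)$, i.e.\ you need the selected denominators to grow at most geometrically, but you only impose the lower bound $q_{k_{j+1}}/q_{k_j}\ge r$, and no upper bound can be arranged, since consecutive convergent denominators satisfy $q_{k+1}\approx a_{k+1}q_k$ with $a_{k+1}$ unbounded (for Liouville-type $\alpha$ a single partial quotient can make $q_{k_1}/N$, hence $|n_\epsilon|/N$, far larger than any $\exp(c\sqrt{\log|n_\epsilon|})$). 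With $\{0,1\}$ coefficients on convergents of denominator $\ge N$ you cannot bridge such a gap; the paper's construction instead perturbs a good inhomogeneous approximant (a truncated Ostrowski expansion of $\gamma$) by $a\,(p_{i-1},q_{i-1})+b\,(p_i,q_i)$ with \emph{large} coefficients $0\le a,b\le \exp(O(\sqrt{\log q_i}))$, which keeps the error $\le (1+a+b)/q_i$ because $|q_{i-1}\alpha-p_{i-1}|\le 1/q_i$, and this works uniformly in $\alpha$.

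Second, the sieve step is not just unquantified but cannot be closed by a union bound in the form you describe. You have only $2^L=\exp(O(\sqrt{\log N}))$ candidates, while the primes that could divide $\gcd(m_\epsilon,n_\epsilon)$ range up to $|n_\epsilon|$; equidistribution of the subset sums $\sum\epsilon_jq_{k_j}$, $\sum\epsilon_jp_{k_j}$ modulo $p$ is not available (the sparse $q_{k_j}$ need not generate anything mod $p$, and the identity $p_kq_{k+1}-p_{k+1}q_k=\pm1$ only concerns consecutive indices, which you do not use), and for primes beyond $2^L$ each prime can kill candidates without any compensating saving, so ``$O(\log|n_\epsilon|)$ large prime divisors per integer'' does not yield a contradiction. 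The missing idea, which is the heart of the paper's argument, is that any common divisor of $m+br$ and $n+bs$ divides the \emph{fixed} determinant $ms-nr$ (when $(r,s)=1$), so only the primes dividing one integer matter; the paper then uses a second free parameter $a$ to translate this determinant through a short interval and, by an Erd\H{o}s-type lemma, lands on a value with only $O(\sqrt{\log q_i})$ prime factors, after which a M\"obius count over the divisors of the determinant produces a coprime pair with $b\le \exp(O(\sqrt{\log q_i}))$. The constant $2\sqrt{\log 2}$ comes from balancing the ranges of these two parameters ($g_c$ and $g_{c'}$ with $cc'>1/\log 2$), not from a limit $r\to 2$ in a geometric selection of convergents. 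As it stands, your proposal would need both the determinant observation and a replacement for the geometric-growth assumption before it could be made into a proof.
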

As $n\rar\infty$ the function $\exp (c\sqrt{\log n})$ grows asymptotically more slowly than any power of $n$, and so we have the following immediate corollary.
\begin{corollary}
For any irrational $\alpha\in\R$ and for any $\gamma\in\R$ and $\epsilon>0$ there are infinitely many pairs of coprime integers $m,n$ such that
\begin{equation*}
\left|n\alpha-m-\gamma\right|\le\frac{1}{|n|^{1-\epsilon}}.
\end{equation*}
\end{corollary}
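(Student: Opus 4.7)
The plan is to deduce the corollary immediately from Theorem~\ref{thm:main} by comparing the two right-hand side bounds. First I would fix $\epsilon>0$ and pick any admissible constant, say $c = 2\sqrt{\log 2} + 1$, so that Theorem~\ref{thm:main} furnishes an infinite sequence of coprime pairs $(m,n)$ satisfying
\[
\left|n\alpha - m - \gamma\right| \le \frac{\exp\!\bigl(c\sqrt{\log |n|}\bigr)}{|n|}.
\]
Then I would show that for all sufficiently large $|n|$, the numerator on the right is bounded above by $|n|^{\epsilon}$, so the theorem's bound is at least as strong as the one claimed in the corollary.

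The comparison is a one-line logarithmic calculation: $\exp(c\sqrt{\log|n|}) \le |n|^{\epsilon}$ is equivalent to $c\sqrt{\log|n|} \le \epsilon \log|n|$, which in turn is equivalent to $\log |n| \ge (c/\epsilon)^2$. Thus the inequality holds as soon as $|n| \ge \exp(c^2/\epsilon^2)$, a threshold that depends only on $\epsilon$ (once $c$ has been fixed).

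To finish, I would observe that Theorem~\ref{thm:main} produces infinitely many coprime pairs, and since for each bound on $|n|$ only finitely many such pairs can lie below it, discarding the finitely many pairs with $|n| < \exp(c^2/\epsilon^2)$ leaves an infinite subfamily on which the stronger inequality $|n\alpha - m - \gamma| \le 1/|n|^{1-\epsilon}$ is automatic.

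There is no real obstacle here; the whole content of the corollary is the sub-polynomial growth of $\exp(c\sqrt{\log |n|})$, and the only thing to be mildly careful about is that the threshold on $|n|$ is fixed \emph{after} choosing $\epsilon$, so the pigeonhole step producing infinitely many surviving pairs is legitimate.
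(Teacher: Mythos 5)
Your proposal is correct and is exactly the paper's (implicit) argument: the corollary is deduced from Theorem~\ref{thm:main} by noting that $\exp(c\sqrt{\log|n|})$ is eventually dominated by $|n|^{\epsilon}$, and you have merely made the threshold $|n|\ge\exp(c^2/\epsilon^2)$ and the discarding of finitely many small-$|n|$ pairs explicit. Nothing is missing.
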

Our method uses only elementary techniques and it seems plausible that by a refinement one might be able to replace the right hand side of (\ref{eqn:mainthm}) by $c'/|n|$. This would clearly be best possible, apart from the determination of the best constant $c'$.

\section{Preliminary results}\label{sec:prelim}
\subsection{Continued fractions}
We write the simple continued fraction expansion of an irrational real number $\alpha$ as
\begin{align*}
\alpha = a_0 + \cfrac{1}{a_1+
            \cfrac{1}{a_2+
             \cfrac{1}{a_3+\dotsb}}}=[a_0; a_1, a_2, a_3, \dots ],
\end{align*}
where $a_0$ is an integer and $a_1, a_2, \dots $ is a sequence of positive integers uniquely determined by
$\alpha$. The rational numbers
\[\frac{p_k}{q_k}=[a_0;a_1,\ldots ,a_k],\quad k\ge 0,\]
are the principal convergents to $\alpha$, and it is assumed that $p_k$ and $q_k$ are coprime and that $q_k>0$ for all $k$. For $k\ge 0$ we also write
\begin{equation*}
D_k=q_k\alpha-p_k.
\end{equation*}
We have by the basic properties of continued fractions that for $k\ge 1$,
\begin{equation}\label{eqn:cfprop1}
p_{k+1}=a_{k+1}p_k+p_{k-1},\qquad q_{k+1}=a_{k+1}q_k+q_{k-1},
\end{equation}
\begin{equation}\label{eqn:cfprop2}
p_kq_{k-1}-q_kp_{k-1}=(-1)^{k+1},\quad\text{ and}
\end{equation}
\begin{equation}\label{eqn:D_kineq}
(-1)^kD_k=\left|q_k\alpha-p_k\right|\le\frac{1}{q_{k+1}}.
\end{equation}
For fixed irrational $\alpha$ we can use the greedy algorithm to represent any natural number uniquely as a weighted sum of the $q_k$'s, where the $k$th weight does not exceed $a_{k+1}$. This is made precise by the following lemma.
\begin{lemma}\label{lem:intcfexp}\cite[Section II.4]{RockettSzusz}
Suppose $\alpha\in\R$ is irrational. Then for every $n\in\N$ there is a unique integer $M\ge0$ and a unique sequence $\{c_{k+1}\}_{k=0}^\infty$ of integers such that $q_M\le n< q_{M+1}$ and
\begin{equation}\label{Ostexp1}
n=\sum_{k=0}^\infty c_{k+1}q_k,
\end{equation}
\begin{equation*}
\text{with }~0\le c_1<a_1,\quad 0\le c_{k+1}\le a_{k+1}\ \text{ for } \ k\ge 1,
\end{equation*}
\begin{equation*}
c_k=0 \quad \text{whenever}\quad c_{k+1}=a_{k+1}\ \text{ for some }k\ge 1, \qquad\text{and}
\end{equation*}
\begin{equation*}
c_{k+1}=0 \quad \text{for}  \quad k>M.
\end{equation*}
\end{lemma}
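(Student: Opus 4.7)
The plan is to prove existence by a descending greedy algorithm and to derive uniqueness from the key bound $\sum_{k=0}^{M}c_{k+1}q_k<q_{M+1}$, valid for every admissible sequence supported on indices $\{0,\ldots,M\}$. The strict monotonicity of $(q_k)$ already pins down a unique $M$ with $q_M\le n<q_{M+1}$, so the content of the lemma is to check that the greedy digits satisfy all of the stated constraints, and that no other admissible representation of $n$ exists.

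For existence, I would set $n_M=n$ and, descending through $k=M,M-1,\ldots,0$, define $c_{k+1}=\lf n_k/q_k\rf$ and $n_{k-1}=n_k-c_{k+1}q_k$, so that $0\le n_{k-1}<q_k$ by construction. The recurrence $q_{k+1}=a_{k+1}q_k+q_{k-1}$ from~(\ref{eqn:cfprop1}), combined with the inductive bound $n_k<q_{k+1}$, yields $c_{k+1}\le a_{k+1}$; the base case $k=0$ gives $c_1=n_0<q_1=a_1$. The carry condition is the most delicate constraint but also follows quickly: if $c_{k+1}=a_{k+1}$ then $n_{k-1}=n_k-a_{k+1}q_k<q_{k-1}$, which forces $c_k=\lf n_{k-1}/q_{k-1}\rf=0$.

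For uniqueness, I would prove by induction on $M$ that every admissible sequence with $c_{k+1}=0$ for $k>M$ satisfies $\sum_{k=0}^{M}c_{k+1}q_k<q_{M+1}$. The base case $M=0$ reduces to $c_1<a_1=q_1$. The inductive step splits according to whether $c_{M+1}<a_{M+1}$, in which case the inductive hypothesis at $M-1$ bounds the lower-order part below $q_M$ and hence the total below $a_{M+1}q_M<q_{M+1}$; or $c_{M+1}=a_{M+1}$, in which case the carry rule forces $c_M=0$ and the hypothesis at $M-2$ bounds the remaining part below $q_{M-1}$, giving total less than $a_{M+1}q_M+q_{M-1}=q_{M+1}$. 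This bound identifies $M$ from $n$ via $q_M\le n<q_{M+1}$, and a short descent then pins each $c_{k+1}$ down as the greedy choice. The only real subtlety throughout is the bookkeeping around the carry condition, but it enters in both directions as a one-line consequence of the recurrence~(\ref{eqn:cfprop1}).
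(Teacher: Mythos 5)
The paper does not prove this lemma at all (it is quoted from Rockett--Sz\"usz, Section II.4), but your argument—greedy descent for existence, plus the key bound $\sum_{k=0}^{M}c_{k+1}q_k<q_{M+1}$ for admissible sequences to force uniqueness—is precisely the standard proof of the Ostrowski expansion and is correct. One cosmetic caveat: $(q_k)$ is not strictly increasing at the first step when $a_1=1$ (then $q_0=q_1=1$), so the uniqueness of $M$ should be justified by noting that $n\ge 1$ rules out $M=0$ in that case rather than by appealing to strict monotonicity.
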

Furthermore we can construct a similar expansion for real numbers by using the $D_k$'s in place of the $q_k$'s. In what follows $\{x\}$ denotes the fractional part of a real number $x$.
\begin{lemma}\label{lem:realcfexp}\cite[Theorem II.6.1]{RockettSzusz}
Suppose $\alpha\in\R\setminus\Q$ has continued fraction expansion as above. For any $\gamma\in [-\{\alpha\},1-\{\alpha\})\setminus (\alpha\Z+\Z)$ there is a unique sequence $\{b_{k+1}\}_{k=0}^{\infty}$ of integers such that
\begin{equation}\label{Ostexp2}
\gamma=\sum_{k=0}^\infty b_{k+1}D_k,
\end{equation}
\begin{equation*}
\text{with }~0\le b_1<a_1,\quad 0\le b_{k+1}\le a_{k+1}\ \text{ for } \ k\ge 1,\qquad\text{and}
\end{equation*}
\begin{equation*}
b_k=0 \quad \text{whenever}\quad b_{k+1}=a_{k+1}\ \text{ for some }k\ge 1.
\end{equation*}
\end{lemma}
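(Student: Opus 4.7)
The plan is to mirror Lemma \ref{lem:intcfexp}, with the real numbers $D_k$ playing the role of the integers $q_k$, by running a greedy algorithm on the residuals $\gamma_k = \gamma - \sum_{j<k}b_{j+1}D_j$. The numerical backbone I would establish first is the recurrence $D_{k+1} = a_{k+1}D_k + D_{k-1}$ (immediate from (\ref{eqn:cfprop1})) combined with the alternating sign pattern $(-1)^kD_k>0$, which together give
\[|D_{k-1}| = a_{k+1}|D_k| + |D_{k+1}|.\]
Iterating this identity yields both $|D_N|\to 0$ and the tail identity $\sum_{j\ge N}a_{j+1}|D_j| = |D_{N-1}|$; these are what drive existence and uniqueness respectively.

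For existence I would construct the $b_{k+1}$ inductively. At stage $k$, $\gamma_k$ lies in a prescribed half-open interval $J_k$ of length $|D_{k-1}|$ (with the convention $|D_{-1}|=1$ and $J_0 = [-\{\alpha\},1-\{\alpha\})$), and I would partition $J_k$ into $a_{k+1}$ subintervals of length $|D_k|$ together with one smaller ``leftover'' subinterval of length $|D_{k+1}|$, labelled so that the leftover corresponds to $b_{k+1} = a_{k+1}$ (and the $b_1 < a_1$ constraint at $k=0$ reflects that the leftover piece is simply absent when the length of $J_0$ is exactly $a_1 |D_0|$, which it is not in general; the degenerate case $b_1 = a_1$ would land outside $J_0$). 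Setting $b_{k+1}$ equal to the index of the subinterval containing $\gamma_k$, the new residual $\gamma_{k+1}$ lands in $J_{k+1}$ (up to a reflection dictated by the sign of $D_k$), and the partition is arranged so that whenever $\gamma_k$ falls in the leftover subinterval, $\gamma_{k+1}$ automatically lies in the ``$b_{k+2}=0$'' subinterval of $J_{k+1}$, enforcing the non-maximality condition. The hypothesis $\gamma\notin\alpha\Z+\Z$ prevents $\gamma_k$ from ever landing on a subinterval boundary (each such boundary is of the form $n\alpha-m$ for integers $n,m$), so the greedy digit is always unambiguous, and $|\gamma_k|\le|D_{k-1}|\to 0$ gives convergence of the series to $\gamma$.

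For uniqueness, suppose two admissible expansions of $\gamma$ agree below index $k_0$ and differ at $k_0$. Rearranging gives
\[(b_{k_0+1}-b'_{k_0+1})D_{k_0} = \sum_{k>k_0}(b'_{k+1}-b_{k+1})D_k,\]
whose left-hand side has magnitude at least $|D_{k_0}|$. By the triangle inequality and the tail identity above, the right-hand side is bounded in magnitude by $\sum_{k>k_0}a_{k+1}|D_k| = |D_{k_0}|$, with equality only if $|b'_{k+1}-b_{k+1}| = a_{k+1}$ with sign aligned with $D_k$ for every $k>k_0$; the non-maximality condition on both sequences forbids this, yielding a contradiction.

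The main obstacle is verifying that the partition scheme of the existence step really does preserve the non-maximality condition automatically. This requires a parity-dependent case analysis, since $D_k$ alternates in sign and so the ``leftover'' subinterval alternates between the two ends of $J_k$, together with a careful accounting of the orientation of each subinterval boundary to confirm that $\gamma \notin \alpha\Z + \Z$ really does exclude the degenerate cases for every $k$ simultaneously.
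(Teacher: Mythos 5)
First, note that the paper does not prove this lemma at all---it is quoted from Rockett and Sz\"usz \cite[Theorem II.6.1]{RockettSzusz}---so the only comparison available is with the standard textbook argument, which is indeed a greedy/interval construction of the kind you sketch. The difficulty is that your write-up contains a genuine error at its quantitative core. From $|D_{k-1}|=a_{k+1}|D_k|+|D_{k+1}|$ one gets $a_{j+1}|D_j|=|D_{j-1}|-|D_{j+1}|$, and summing this over \emph{all} $j\ge N$ telescopes to $|D_{N-1}|+|D_N|$, not to $|D_{N-1}|$: only the sum over $j$ of a single parity ($j=N,N+2,N+4,\dots$) collapses to the single term $|D_{N-1}|$ (this one-parity telescoping is exactly what the paper uses in the proof of Lemma \ref{lem:D_ksumsgn}). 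Consequently your uniqueness argument fails as written: the triangle inequality only gives
\begin{equation*}
\Bigl|\sum_{k>k_0}(b'_{k+1}-b_{k+1})D_k\Bigr|\le\sum_{k>k_0}a_{k+1}|D_k|=|D_{k_0}|+|D_{k_0+1}|,
\end{equation*}
which is strictly larger than the lower bound $|D_{k_0}|$ for the left-hand side, so there is no contradiction and no forced ``equality case'' to which the non-maximality condition can be applied. A correct uniqueness proof has to exploit the sign structure rather than a blunt two-parity triangle inequality: for an admissible tail one has $0\le(-1)^m\sum_{k\ge m}b_{k+1}D_k\le|D_{m-1}|$ (positive-sign terms telescope to at most $|D_{m-1}|$, negative-sign terms only help), and comparing the two tails at the first differing index $k_0$ in the direction of $\sgn(D_{k_0})$ forces $|b_{k_0+1}-b'_{k_0+1}|=1$ together with maximal digits $b_{k+1}=a_{k+1}$ for all $k>k_0$ of one parity in one of the expansions; the non-maximality condition applied at $k=k_0+1$ (i.e.\ $b_{k_0+2}=a_{k_0+2}$ forces $b_{k_0+1}=0$) then gives the contradiction. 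Without this parity-aware bookkeeping the argument does not close.

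The existence half has a different problem: the decisive verification---that the greedy partition of $J_k$ can be arranged so that landing in the ``leftover'' subinterval (digit $a_{k+1}$) automatically forces the next digit to be $0$, uniformly in the alternating orientation coming from $\sgn(D_k)=(-1)^k$---is precisely the content of the proof, and you explicitly defer it as ``the main obstacle'' rather than carry it out. Your observations that $|D_{k-1}|=a_{k+1}|D_k|+|D_{k+1}|$ makes the partition possible, that $|\gamma_k|\le|D_{k-1}|\to0$ gives convergence, and that subinterval boundaries lie in $\alpha\Z+\Z$ (so the hypothesis $\gamma\notin\alpha\Z+\Z$ excludes ties) are all correct and are the right ingredients, but as it stands the proposal is a plan for a proof, with its hardest step acknowledged and omitted and its uniqueness step resting on a false identity.
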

The relevance of these expansions to inhomogeneous approximation is explained by the following result, which can be deduced from the arguments in \cite[Section II.6]{RockettSzusz} (a rigorous proof can be found in \cite{BHVV}).
\begin{lemma}\label{lem:inhomappbnd}
Let $\alpha\in\R\setminus\Q$ and suppose that $\gamma\in [-\{\alpha\},1-\{\alpha\})\setminus(\alpha\Z+\Z).$ Choose an integer $n\in\N$ and, referring to the expansions (\ref{Ostexp1}) and (\ref{Ostexp2}), write $\delta_{k+1}=c_{k+1}-b_{k+1}$ for $k\ge 0$. If $\delta_{k+1}=0$ for all $k<m$ and some $m\ge 4$ then
\begin{equation*}
\left|n\alpha-\sum_{k=0}^Mc_{k+1}p_k-\gamma\right|\le \frac{3\max(1,|\delta_{m+1}|)}{q_{m+1}}.
\end{equation*}
\end{lemma}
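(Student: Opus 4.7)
The plan is to convert the left-hand side into a tail of the Ostrowski expansions and then estimate that tail by telescoping. Since $q_k\alpha - p_k = D_k$ and $c_{k+1}=0$ for $k>M$, multiplying (\ref{Ostexp1}) by $\alpha$ and subtracting yields
\[
n\alpha - \sum_{k=0}^M c_{k+1} p_k \;=\; \sum_{k=0}^\infty c_{k+1} D_k.
\]
Subtracting (\ref{Ostexp2}) and invoking the hypothesis $\delta_{k+1}=0$ for $k<m$ gives the clean identity
\[
n\alpha - \sum_{k=0}^M c_{k+1} p_k - \gamma \;=\; \sum_{k=m}^\infty \delta_{k+1} D_k \;=\; \delta_{m+1} D_m \;+\; \sum_{k=m+1}^\infty \delta_{k+1} D_k.
\]

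Next I isolate the leading term and the tail. The leading term is controlled directly by (\ref{eqn:D_kineq}): $|\delta_{m+1} D_m| \le |\delta_{m+1}|/q_{m+1}$. For the tail, the bound $|\delta_{k+1}| \le a_{k+1}$ follows immediately from $b_{k+1}, c_{k+1} \in \{0,\ldots,a_{k+1}\}$. Combined with the identity
\[
a_{k+1}|D_k| = |D_{k-1}| - |D_{k+1}|,
\]
which is an easy consequence of the recursion (\ref{eqn:cfprop1}) and the fact that the $D_k$ alternate in sign, a straightforward telescoping shows
\[
\sum_{k=m+1}^\infty |\delta_{k+1} D_k| \;\le\; \sum_{k=m+1}^\infty a_{k+1}|D_k| \;=\; |D_m| + |D_{m+1}| \;\le\; \frac{2}{q_{m+1}}.
\]
Putting the two estimates together gives a total bound of $(|\delta_{m+1}|+2)/q_{m+1}$, which is at most $3\max(1,|\delta_{m+1}|)/q_{m+1}$: when $|\delta_{m+1}|\ge 1$ this reduces to $|\delta_{m+1}|+2 \le 3|\delta_{m+1}|$, and when $\delta_{m+1}=0$ it reduces to $2 \le 3$.

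I expect the main obstacle to be not the estimation itself but the bookkeeping needed to justify absolute convergence and rearrangement of the doubly-indexed series, together with the sign-alternation and strict decrease of $|D_k|$ that make the telescoping step clean. The hypothesis $m \ge 4$ looks like a safety buffer against small-index anomalies of the Ostrowski expansion (for instance, the strict inequalities $c_1<a_1$ and $b_1<a_1$, and the fact that $D_0=\{\alpha\}$ lies outside the alternating regime): once $m$ is large enough that the generic recursive structure of the $D_k$ applies, the estimate above goes through essentially as written and yields the stated bound.
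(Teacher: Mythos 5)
Your proof is correct, and in fact there is nothing in the paper to compare it against line by line: the paper states Lemma \ref{lem:inhomappbnd} without proof, deferring to the arguments of Rockett--Sz\"usz and to the reference \cite{BHVV}. Your argument is the natural self-contained one, and it uses exactly the same device the paper itself employs in proving Lemma \ref{lem:D_ksumsgn}, namely the identity $a_{k+1}D_k=D_{k+1}-D_{k-1}$ from (\ref{eqn:cfprop1}) together with the sign alternation of the $D_k$, so that $\sum_{k\ge m+1}a_{k+1}|D_k|$ telescopes to $|D_m|+|D_{m+1}|$. All the individual steps check out: the identity $n\alpha-\sum_{k=0}^M c_{k+1}p_k-\gamma=\sum_{k\ge m}\delta_{k+1}D_k$ is immediate (the first series is finite and the second converges absolutely, precisely by your telescoping bound, so no delicate rearrangement issues arise), $|\delta_{k+1}|\le a_{k+1}$ holds since both digits lie in $\{0,\dots,a_{k+1}\}$, and $(|\delta_{m+1}|+2)/q_{m+1}\le 3\max(1,|\delta_{m+1}|)/q_{m+1}$ in both cases $\delta_{m+1}=0$ and $|\delta_{m+1}|\ge 1$. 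Note that you actually prove something slightly stronger than stated: your bound $(|\delta_{m+1}|+2)/q_{m+1}$ is sharper than the stated one, and your argument never really uses $m\ge 4$ (any $m\ge 1$ works, since $D_k$ has sign $(-1)^k$ and satisfies $|D_k|\le 1/q_{k+1}$ for all $k\ge 0$); the hypothesis $m\ge 4$ is simply inherited from the form in which the lemma is quoted from \cite{BHVV} and is harmless, so your speculation about it is not a gap.
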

Finally we will use the well known fact that
\begin{align}\label{eqn:cfprop3}
\frac{p_{k-1}}{q_{k-1}}<\alpha<\frac{p_k}{q_k}~\text{ for } k \text{ odd},
\end{align}
and we will also need the following inhomogeneous version of this fact.
\begin{lemma}\label{lem:D_ksumsgn}
Suppose $\alpha\in\R\setminus\Q$ and $\gamma\in [-\{\alpha\}, 1-\{\alpha\}]\setminus (\alpha\Z+\Z)$. Then, using the notation of Lemma \ref{lem:realcfexp}, if $m\ge 4$ and $b_{m+1}\not= 0$ we have that
\begin{equation}\label{eqn:D_ksumsgn1}
\sgn\left(\sum_{k=m}^\infty b_{k+1}D_k\right)=(-1)^m.
\end{equation}
\end{lemma}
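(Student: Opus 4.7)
The plan is to show that in the sum $\sum_{k=m}^\infty b_{k+1}D_k$ the leading term $b_{m+1}D_m$ dominates the remainder in absolute value; since $D_m$ has sign $(-1)^m$ by \eqref{eqn:D_kineq}, the conclusion then follows.

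Writing $e_k=|D_k|$, the recurrences in (\ref{eqn:cfprop1}) give $D_{k+1}=a_{k+1}D_k+D_{k-1}$, and since consecutive $D_k$'s alternate in sign, this translates into the all-positive identity $e_{k-1}=a_{k+1}e_k+e_{k+1}$. Summing over $k\ge m+1$ and using that $e_k\to 0$, this telescopes to
\[\sum_{k=m+1}^\infty a_{k+1}e_k=e_m+e_{m+1}.\]

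The next step is to sharpen the termwise bound $\sum_{k\ge m+1}b_{k+1}e_k\le\sum_{k\ge m+1}a_{k+1}e_k=e_m+e_{m+1}$ into the strict inequality $\sum_{k\ge m+1}b_{k+1}e_k<e_m$. The assumption $b_{m+1}\ne 0$, together with the ``no consecutive maximal digits'' rule of Lemma \ref{lem:realcfexp}, forces $b_{m+2}\le a_{m+2}-1$, so the defect $(a_{m+2}-b_{m+2})e_{m+1}$ contributes at least $e_{m+1}$ to $\sum_{k\ge m+1}(a_{k+1}-b_{k+1})e_k$. This already gives $\sum b_{k+1}e_k\le e_m$; strictness comes from the observation that one cannot have $b_{k+1}=a_{k+1}$ for every $k\ge m+2$, because any two consecutive indices at which $b_{k+1}=a_{k+1}\ge 1$ would violate the same rule.

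Combining the estimates,
\[\biggl|\sum_{k=m+1}^\infty b_{k+1}D_k\biggr|\le\sum_{k=m+1}^\infty b_{k+1}e_k<e_m\le b_{m+1}e_m=|b_{m+1}D_m|,\]
so $\sum_{k=m}^\infty b_{k+1}D_k$ inherits the sign of $b_{m+1}D_m$, which equals $\sgn(D_m)=(-1)^m$. The only delicate point is the passage from the weak bound $\sum b_{k+1}e_k\le e_m$ to strictness; this is where the forbidden-pattern constraint on the Ostrowski expansion does the essential work.
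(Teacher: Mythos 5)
Your proof is correct, and it differs from the paper's in a way worth noting. Both arguments rest on the same telescoping identity coming from $D_{k+1}=a_{k+1}D_k+D_{k-1}$, but the mechanisms for pinning down the sign are different. The paper only estimates the terms of sign opposite to $D_m$: their total absolute value is at most $\left|a_{m+2}D_{m+1}+a_{m+4}D_{m+3}+\cdots\right|=|D_m|\le b_{m+1}|D_m|$, which gives $(-1)^m\sum_{k\ge m}b_{k+1}D_k\ge 0$, and strictness is then imported from the hypothesis $\gamma\notin\alpha\Z+\Z$ (the tail equals $\gamma$ minus an element of $\alpha\Z+\Z$, so it cannot vanish). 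You instead bound the entire tail $\sum_{k\ge m+1}b_{k+1}D_k$ in absolute value strictly below $|b_{m+1}D_m|$, using the full identity $\sum_{k\ge m+1}a_{k+1}e_k=e_m+e_{m+1}$ together with the Ostrowski digit constraints: $b_{m+1}\ne 0$ forces $b_{m+2}\le a_{m+2}-1$ (recovering the $e_{m+1}$), and the impossibility of two consecutive maximal digits guarantees a further strictly positive defect at some $k_0\ge m+2$. This makes the strictness purely combinatorial, never invoking $\gamma\notin\alpha\Z+\Z$ beyond the existence of the expansion, and it yields the stronger statement that the leading term dominates the tail, $\bigl|\sum_{k\ge m+1}b_{k+1}D_k\bigr|<|b_{m+1}D_m|$, rather than just the sign. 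The trade-off is that your argument genuinely needs the forbidden-pattern rule at two places, whereas the paper's gets away with only $b_{m+1}\ge 1$ plus the arithmetic non-vanishing of the tail; both are complete proofs.
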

\begin{proof}
We have from (\ref{eqn:D_kineq}) that $\sgn (D_k)=(-1)^k$ for $k\ge 1$. Therefore the terms in (\ref{eqn:D_ksumsgn1}) with opposite sign to $D_m$, when added together, are no larger in absolute value than
\begin{align*}
\left|a_{m+2}D_{m+1}+a_{m+4}D_{m+3}+\cdots\right|.
\end{align*}
By (\ref{eqn:cfprop1}) this expression is equal to
\begin{align*}
\left|(D_{m+2}-D_{m})+(D_{m+4}-D_{m+2})+(D_{m+6}-D_{m+4})+\cdots\right|=|D_m|,
\end{align*}
and this shows that
\[(-1)^m\left(\sum_{k=m}^\infty b_{k+1}D_k\right)\ge 0.\]
Furthermore the assumption that $\gamma\not\in (\alpha\Z+\Z)$ means that there cannot be equality in this inequality, so we are finished.
\end{proof}

\subsection{Estimates from elementary number theory}
In what follows $\mu$ denotes the M\"{o}bius function, $\varphi$ the Euler-phi function, $\omega (n)$ the number of distinct prime factors of $n$, $\pi(x)$ the number of primes $\le x$, and $(m,n)$ the greatest common divisor of $m$ and $n$. The letter $p$, without a subscript, will always denote a prime number (not to be confused with the quantities $p_k$ coming from continued fractions). We will use the Landau and Vinogradov asymptotic notation with the standard meaning for the symbols $\ll, \gg, O(\cdot), o(\cdot),$ and $\sim$, and all implied constants will be universal unless otherwise indicated. All summations are restricted to positive integers.

For use in what follows we remind the reader of two well known results of Mertens (see \cite[Theorems 427, 428]{HardyWright}), that
\begin{align}
&\sum_{p\le x}\frac{1}{p}\sim \log\log x,\quad \text{and}\label{eqn:mertens1}\\
&\prod_{p\le x}\left(1-\frac{1}{p}\right)\sim \frac{e^{-\gamma}}{\log x},\label{eqn:mertens2}
\end{align}
where $\gamma$ is Euler's constant. It follows from (\ref{eqn:mertens2}) (see \cite[Theorem 328]{HardyWright}) that
\begin{equation}\label{eqn:mertensforphi}
\frac{\varphi(n)}{n}\gg\frac{1}{\log\log n}.
\end{equation}
Next we prove a lemma about pairs of coprime integers in simultaneous arithmetic progressions.
\begin{lemma}\label{lem:arithprogs}
Suppose $m,n,r,s\in\N$ satisfy $(r,s)=1$ and $nr-ms\not= 0$. Then there is a universal constant $\kappa >0$ such that for any $A\in\N$ with
\[A>\kappa\log\log \left(\max (3,|ms-nr|)\right)2^{\omega (ms-nr)},\]
we can find an integer $1\le b\le A$ such that
\[(m+br,n+bs)=1.\]
\end{lemma}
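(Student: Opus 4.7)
The plan is a Möbius-inversion sieve. Set $B = |ms - nr| > 0$ and observe that if a prime $p$ divides both $m+br$ and $n+bs$, then $p$ divides $s(m+br) - r(n+bs) = ms - nr$, so $p \mid B$. Consequently only the primes dividing $B$ can obstruct coprimality.

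For each prime $p \mid B$, I claim there is exactly one residue class $b \pmod p$ for which $p$ divides both $m+br$ and $n+bs$. If $p \nmid rs$, then the congruence $ms \equiv nr \pmod p$ rearranges to $-mr^{-1} \equiv -ns^{-1} \pmod p$, so both conditions $p \mid m+br$ and $p \mid n+bs$ collapse to the single congruence $b \equiv -mr^{-1} \pmod p$. If instead $p \mid r$ (the case $p \mid s$ is symmetric, and $(r,s)=1$ rules out both simultaneously), then $p \mid ms$ combined with $p \nmid s$ forces $p \mid m$, hence $p \mid m+br$ holds for every $b$, while $p \mid n+bs$ singles out the class $b \equiv -ns^{-1} \pmod p$. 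Either way, precisely one class modulo $p$ is bad.

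By the Chinese Remainder Theorem, for each squarefree divisor $d$ of $B$ there is a unique residue class modulo $d$ on which $d$ divides both $m+br$ and $n+bs$, and this class contains $A/d + O(1)$ integers in $[1,A]$. Writing the indicator of coprimality as $\mathbf{1}_{(x,y)=1} = \sum_{e \mid (x,y)} \mu(e)$ and swapping the order of summation, I obtain
\begin{equation*}
\#\{b \in [1,A] : (m+br, n+bs) = 1\} = A \prod_{p \mid B}\left(1-\frac{1}{p}\right) + O\bigl(2^{\omega(B)}\bigr),
\end{equation*}
since the number of squarefree divisors of $B$ is $2^{\omega(B)}$. By (\ref{eqn:mertensforphi}) the product is $\gg 1/\log\log\max(3,B)$, so the left-hand side is at least $1$ once $A$ exceeds a suitable universal constant multiple of $\log\log(\max(3,|ms-nr|)) \cdot 2^{\omega(ms-nr)}$, which is exactly the stated bound.

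The main obstacle I anticipate is justifying the "exactly one bad class" claim in the case $p \mid r$: one must use both the hypothesis $(r,s)=1$ and the fact that $p \mid B$ to force $p \mid m$. Without this step, some primes $p \mid B$ could contribute zero bad classes, which would shrink the main term of the sieve formula and defeat the Mertens lower bound. The remaining ingredients are a routine Möbius--Mertens computation.
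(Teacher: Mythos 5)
Your proof is correct, and at its core it is the same sieve as the paper's: Möbius inversion over divisors of $(m+br,n+bs)$, the observation that any common prime divisor must divide $ms-nr$, a main term $A\prod_{p\mid B}\left(1-\frac{1}{p}\right)=A\varphi(B)/B$ with error $O\!\left(2^{\omega(B)}\right)$, and then (\ref{eqn:mertensforphi}) to conclude. Where you differ is in how the divisor count is executed: you work prime by prime, showing (using $(r,s)=1$ and $p\mid B$) that each $p\mid B$ contributes exactly one bad residue class of $b$, and then assemble squarefree divisors by the Chinese Remainder Theorem, so that the count of $b\le A$ in the bad class mod $d$ is $A/d+O(1)$ directly. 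The paper instead parametrizes $m+br=ed$, $n+bs=fd$, counts the admissible $e$ in an arithmetic progression modulo $r$, and must first treat the case $(m,r)=(n,s)=1$ and then reduce the general case to it by dividing out $d_1=(m,r)$ and $d_2=(n,s)$. Your local analysis --- in particular the step you correctly flag as the crux, that $p\mid r$ together with $p\mid ms-nr$ and $p\nmid s$ forces $p\mid m$, so such primes still contribute exactly one bad class --- handles all cases uniformly and lets you skip that reduction entirely, which makes your write-up somewhat cleaner while yielding the same bound and the same universal constant structure.
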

\begin{proof}
Assume without loss of generality, by reversing the roles of the relevant variables if necessary, that $nr-ms>0$. Write $N(A)=N(m,n,r,s,A)$ for the number of integers $1\le b\le A$ such that $(m+br,n+bs)=1.$ By M\"{o}bius inversion we have
\begin{align*}
N(A)=\sum_{b\le A}~\sum_{d|(m+br,n+bs)}\mu (d).
\end{align*}
For each integer $d$ in the inner sum which divides $(m+br,n+bs)$ we can write $m+br=ed$ and $n+bs=fd$, and by reversing the order of summation we obtain
\begin{align}\label{eqn:N(A)1}
N(A)&=\sum_{d\in\N}\mu (d)\sum_{e,f}1,
\end{align}
where the inner sum is over pairs of integers $e$ and $f$ which satisfy the conditions
\begin{align}
1\le e\le (m+Ar)/d,&\quad 1\le f\le (n+As)/d,\label{eqn:efcond1}\\
ed=m\mod r,&\quad fd=n\mod s,\quad\text{ and}\label{eqn:efcond2}\\
\frac{ed-m}{r}&=\frac{fd-n}{s}.\label{eqn:efcond3}
\end{align}
Now to simplify things let us first deal with the case when $(m,r)=(n,s)=1$. For a given $d$ if there are integers $e$ and $f$ satisfying (\ref{eqn:efcond3}) then clearly we must have that $d|nr-ms$.

On the other hand suppose that $d|nr-ms$ and that $e$ is any integer which satisfies the conditions in (\ref{eqn:efcond1}) and (\ref{eqn:efcond2}). Then we claim that there is exactly one choice of $f$ for which (\ref{eqn:efcond1})-(\ref{eqn:efcond3}) hold. To see this write
\begin{equation}\label{eqn:arithprogpf1}
nr-ms=(g-se)d,\quad\text{ with } g\in\Z,
\end{equation}
so that
\[gd=nr+s(ed-m)=0\mod r.\]
Since $(m,r)=1$ and $ed=m\mod r$ we deduce that $(d,r)=1$ and from the equation above we obtain $g=0\mod r.$ Writing $g=fr$ we then see that
\[fd=n+s\left(\frac{ed-m}{r}\right)=n\mod s,\]
and that (\ref{eqn:efcond3}) is satisfied. Furthermore conditions (\ref{eqn:efcond3}), (\ref{eqn:arithprogpf1}), and $1\le e\le (m+Ar)/d$ together imply that $1\le f\le (n+As)/d$. Finally once $e$ and $d$ are chosen there is clearly at most one choice for $f$, so our claim is verified.

Returning to (\ref{eqn:N(A)1}) this shows that when $(m,r)=(n,s)=1$,
\begin{align*}
N(A)&=\sum_{d|nr-ms}\mu (d)\sum_{\substack{e\le (m+Ar)/d \\ e=md^{-1}\mod r}}1 \\
&=\sum_{d|nr-ms}\mu (d)\left(\frac{m+Ar}{dr}+\xi(d)\right),
\end{align*}
for some real constants $\xi(d)$ satisfying $|\xi (d)|\le 1$. This gives us the inequality
\begin{align*}
N(A)&\ge A\sum_{d|nr-ms}\frac{\mu (d)}{d}-\sum_{d|nr-ms}|\mu (d)|\\
&=A\frac{\varphi (nr-ms)}{nr-ms}-2^{\omega (nr-ms)}.
\end{align*}
In the general case if $(m,r)=d_1$ and $(n,s)=d_2$ then since $(r,s)=1$ we have that $(m+br,n+bs)=1$ if and only if
\[\left(\frac{m}{d_1}+b\left(\frac{r}{d_1}\right),\frac{n}{d_2}+b\left(\frac{s}{d_2}\right)\right)=1,\]
and therefore
\begin{align*}
N(m,n,r,s,A)&=N\left(\frac{m}{d_1},\frac{n}{d_2},\frac{r}{d_1},\frac{s}{d_2},A\right)\\
&\ge A\frac{\varphi \left(\frac{nr-ms}{d_1d_2}\right)}{\left(\frac{nr-ms}{d_1d_2}\right)}-2^{\omega \left(\frac{nr-ms}{d_1d_2}\right)}\\
&\ge A\frac{\varphi (nr-ms)}{nr-ms}-2^{\omega (nr-ms)}.
\end{align*}
Here we have used the facts that if $d|M$ then $\omega (M/d)\le \omega (M)$ and
\[\frac{\varphi (M/d)}{(M/d)}=\prod_{p|(M/d)}\left(1-\frac{1}{p}\right)\ge \frac{\varphi (M)}{M}.\]
Our lower bound for $N(A)$, together with (\ref{eqn:mertensforphi}), completes the proof of the lemma.
\end{proof}
We will also use the following elementary result (the proof of which is adapted from an argument in \cite{Erdos}) about prime divisors of integers in short intervals.
\begin{lemma}\label{lem:primedivs}
Let $c>0$ and for $x>1$ set
\begin{equation*}
g_c(x)=2^{\left(c\sqrt{\log x}\right)}\quad \text{and}\quad h_c(x)=\frac{g_c(x)}{\log g_c(x)\log\log g_c(x)}.
\end{equation*}
Then for any $\epsilon >0$ and for all sufficiently large $x$ (depending on $\epsilon$ and $c$), there is at least one integer $N\in [x,x+h_c(x)]$ with
\[\omega (N)\le \frac{(1+\epsilon)\sqrt{\log x}}{c\log 2}.\]
\end{lemma}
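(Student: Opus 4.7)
The plan is to decompose $\omega(N)=\omega_{\le H}(N)+\omega_{>H}(N)$, where $H:=h_c(x)$ and the two summands count the distinct prime divisors of $N$ that are $\le H$ and those exceeding $H$, respectively. I will then produce for $x$ large a single $N^*\in[x,x+H]$ for which both summands are small, by bounding $\omega_{>H}$ pointwise and $\omega_{\le H}$ on average.

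For the tail, if $N$ has $k$ distinct prime divisors each $>H$ then $N>H^k$, and so for every $N\in[x,x+H]$,
$$\omega_{>H}(N)\le\frac{\log(x+H)}{\log H}.$$
From the definitions one has $\log g_c(x)=c(\log 2)\sqrt{\log x}$, hence
$$\log H=\log g_c(x)-\log\log g_c(x)-\log\log\log g_c(x)=c(\log 2)\sqrt{\log x}+O(\log\log x),$$
while $\log(x+H)=\log x+O(1)$ for $x$ large. A short asymptotic manipulation then gives
$$\omega_{>H}(N)\le\frac{\sqrt{\log x}}{c\log 2}+O(\log\log x)$$
uniformly in $N\in[x,x+H]$.

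For the primes below $H$, the standard averaging argument obtained by swapping the order of summation and invoking Mertens's theorem (\ref{eqn:mertens1}) yields
$$\sum_{N\in[x,x+H]\cap\Z}\omega_{\le H}(N)\le\sum_{p\le H}\left(\frac{H}{p}+1\right)=H\sum_{p\le H}\frac{1}{p}+\pi(H)=(1+o(1))H\log\log H.$$
Since $[x,x+H]$ contains at least $H-1$ integers, some $N^*$ in the interval satisfies $\omega_{\le H}(N^*)\le(1+o(1))\log\log H$; and as $\log H\sim c(\log 2)\sqrt{\log x}$, this gives $\omega_{\le H}(N^*)=O(\log\log x)$.

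Adding the two bounds at $N^*$ and using $\log\log x=o(\sqrt{\log x})$, the $O(\log\log x)$ error is absorbed into any prescribed fraction of $\sqrt{\log x}/(c\log 2)$, giving
$$\omega(N^*)\le\frac{(1+\epsilon)\sqrt{\log x}}{c\log 2}$$
once $x$ is sufficiently large in terms of $\epsilon$ and $c$. The only genuine difficulty is the asymptotic bookkeeping in $\log H$: one must verify that subtracting $\log\log g_c(x)$ and $\log\log\log g_c(x)$ from $\log g_c(x)$ perturbs $1/\log H$ by at most an $O(\log\log x/\log x)$ relative error, so that the leading term of the bound on $\omega_{>H}(N)$ is indeed exactly $\sqrt{\log x}/(c\log 2)$. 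No ideas beyond Mertens's theorem and the observation that integers cannot have too many large prime factors are required.
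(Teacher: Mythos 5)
Your proof is correct and follows essentially the same Erd\H{o}s-style argument as the paper: split $\omega(N)$ at a threshold whose logarithm is $\sim c(\log 2)\sqrt{\log x}$, bound the number of prime factors above the threshold pointwise by $\log N/\log(\text{threshold})$, and bound the count of small prime factors on average over the interval via Mertens' estimate. The only difference is that you cut at $H=h_c(x)$ rather than at $g_c(N)$ as the paper does; since $\log h_c(x)\sim\log g_c(x)$ this changes nothing, and it even spares you the prime number theorem bound on $\pi(g_c(x+h_c(x)))$ that the paper invokes.
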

\begin{proof}
For $n\in\N$ let
\[\omega_c(n)=\sum_{\substack{p|n\\p>g_c(n)}}1.\]
First of all we have that
\begin{align*}
&\sum_{x\le n\le x+h_c(x)}(\omega (n)-\omega_c(n))\\
&\qquad\qquad\le \sum_{p\le g_c(x+h_c(x))}\sum_{\substack{x\le n\le x+h_c(x)\\p|n}}1\\
&\qquad\qquad\le h_c(x)\sum_{p\le g_c(x+h_c(x))}\frac{1}{p}+\pi(g_c(x+h_c(x))).
\end{align*}
Now by (\ref{eqn:mertens1}), the prime number theorem, and the fact that $g_c(x+h_c(x))\sim g_c(x)$, it follows that there is a number $x_0=x_0(\epsilon,c)$ such that
\begin{align*}
&\sum_{x\le n\le x+h_c(x)}(\omega (n)-\omega_c(n))\\
&\qquad\qquad\le (1+\epsilon)\left(h_c(x)\log\log g_c(x)+\frac{g_c(x)}{\log g_c(x)}\right)\\
&\qquad\qquad =2(1+\epsilon)h_c(x)\log\log g_c(x),
\end{align*}
for all $x\ge x_0$. Therefore for every $x\ge x_0$ there is at least one integer $N\in [x,x+h_c(x)]$ with
\[\omega(N)-\omega_c(N)\le 2(1+\epsilon)\log\log g_c(x),\]
and we have that
\begin{align*}
\omega(N)&\le\omega_c(N)+2(1+\epsilon)\log\log g_c(N)\\
&\le \frac{\log N}{\log g_c(N)}+2(1+\epsilon)\log\log g_c(N)\\
&\le \frac{(1+\epsilon')\sqrt{\log x}}{c\log 2},
\end{align*}
provided that $x$ is sufficiently large.
\end{proof}

\section{Proof of main result}\label{sec:mainproof}
Now we come to the proof of Theorem \ref{thm:main}. Let $\alpha\in\R\setminus\Q$ and let $\gamma\in\R\setminus\{0\}$ (the case when $\gamma =0$ is trivial to verify). There are two cases to consider, depending on whether or not $\gamma\in\alpha\Z+\Z$. The analysis in both cases is fundamentally the same, so we will treat them simultaneously. For each $i\ge 0$ define a pair of integers $m_i$ and $n_i$ as follows. If $\gamma=\alpha\ell+\ell'$ for some $\ell,\ell'\in\Z$ then set
\[m_i=p_i-\ell'~\text{ and }~n_i=q_i+\ell.\]
If $\gamma\not\in\alpha\Z+\Z$ then choose $\ell\in\Z$ so that $\gamma-\ell\in [-\{\alpha\},1-\{\alpha\}),$ and write
\[\gamma-\ell=\sum_{k=0}^\infty b_{k+1}D_k\]
as in Lemma \ref{lem:realcfexp}. Then set
\[m_i=-\ell+\sum_{k=0}^{i-1}b_{k+1}p_k~\text{ and }~n_i=\sum_{k=0}^{i-1}b_{k+1}q_k.\]
Next for each $a,b\ge 0$ define
\[m_i(a,b)=m_i+ap_{i-1}+bp_i~\text{ and }~n_i(a,b)=n_i+aq_{i-1}+bq_i.\]
In the case when $\gamma\in\alpha\Z+\Z$ we have from (\ref{eqn:D_kineq}) that for each $i\ge 1$,
\begin{align*}
|n_i(a,b)\alpha-m_i(a,b)-\gamma|=|(1+b)D_i+aD_{i-1}|\le\frac{1+b}{q_{i+1}}+\frac{a}{q_i}.
\end{align*}
On the other hand in the case when $\gamma\not\in\alpha\Z+\Z$ we have from Lemma \ref{lem:inhomappbnd} that for each $i\ge 4$,
\begin{align*}
|n_i(a,b)\alpha-m_i(a,b)-\gamma|&=\left|\sum_{k=i}^\infty b_{k+1}D_k-aD_{i-1}-bD_i\right|\\
&\le\frac{\max(1,b_{i+1})+b}{q_{i+1}}+\frac{a}{q_i}\\
&\le\frac{b}{q_{i+1}}+\frac{1+a}{q_i},
\end{align*}
using the fact that $b_{i+1}\le a_{i+1}$. In either case we have for $i\ge 4$ that
\begin{equation}\label{eqn:n_im_ibound}
|n_i(a,b)\alpha-m_i(a,b)-\gamma|\le\frac{1+a+b}{q_i}.
\end{equation}
Now consider the quantities
\[N_i(a)=n_i(a,0)p_i-m_i(a,0)q_i.\]
Using (\ref{eqn:cfprop2}) we have that
\begin{align}\label{eqn:N_i(a)}
N_i(a)=n_ip_i-m_iq_i+(-1)^{i+1}a.
\end{align}
We would like to apply Lemma \ref{lem:primedivs} to show that we can find an integer $a$ which is not too large, for which $\omega (N_i(a))$ is also not too large. In order to do this we will verify that $|N_i(0)|\rar\infty$ as $i\rar\infty$. Note that if this were not the case we would still be able to complete the proof (in fact with a better bound), however we still use the extra information for our final calculations.

In the case when $\gamma\in\alpha\Z+\Z$ we have by (\ref{eqn:D_kineq}) and (\ref{eqn:cfprop3}) that
\begin{align*}
N_i(0)&=p_i\ell+q_i\ell'\\
&=q_i\left(\frac{p_i}{q_i}\ell+\ell'\right)\\
&=q_i\left(\alpha\ell+\ell'+\frac{(-1)^{i+1}\xi_i\ell}{q_iq_{i+1}}\right)\\
&=q_i\gamma+\frac{(-1)^{i+1}\xi_i\ell }{q_{i+1}},
\end{align*}
for some constant $0<\xi_i\le 1$. Since $\gamma\not=0$ it is clear in this case that $|N_i(0)|\sim q_i|\gamma|\rar\infty$ as $i\rar\infty$.

In the case when $\gamma\not\in\alpha\Z+\Z$ we have that
\begin{align*}
N_i(0)&=n_iq_i\left(\frac{p_i}{q_i}-\frac{m_i}{n_i}\right)\\
&=n_iq_i\left(\frac{p_i}{q_i}-\alpha+\frac{1}{n_i}(n_i\alpha-m_i)\right)\\
&=n_iq_i\left(\frac{p_i}{q_i}-\alpha+\frac{1}{n_i}\left(\gamma-\sum_{k=i}^\infty b_{k+1}D_k\right)\right).
\end{align*}
Thus by (\ref{eqn:cfprop3}) and Lemmas \ref{lem:inhomappbnd} and \ref{lem:D_ksumsgn} we obtain for $i\ge 4$ that
\begin{align*}
N_i(0)&=q_i\gamma+\frac{(-1)^{i+1}\xi_{i,1}n_i}{q_{i+1}}+\frac{(-1)^{i+1}\xi_{i,2}\max (1,b_{i+1})q_i}{q_{i+1}},
\end{align*}
for some constants $0<\xi_{i,1}\le 1$ and $0<\xi_{i,2}\le 3$. Finally by the uniqueness of the expansion in Lemma \ref{lem:intcfexp} we have that $n_i<q_i$ and we conclude that
\[N_i(0)=q_i\gamma+(-1)^{i+1}\xi_i\]
for some $0<\xi_i\le 4$. As before this shows that $|N_i(0)|\sim q_i|\gamma|\rar\infty$ as $i\rar\infty$.

Now choose $c>0$ and $\epsilon >0$. If $i_0=i_0(\epsilon,c)$ is chosen large enough then it follows from (\ref{eqn:N_i(a)}) and Lemma \ref{lem:primedivs} that for all $i\ge i_0$, there is an integer $1\le a\le h_c(|N_i(0)|)$ with
\begin{equation}\label{eqn:a.choice}
\omega (N_i(a))\le \frac{(1+\epsilon)\sqrt{\log |N_i(0)|}}{c\log 2}.
\end{equation}
There are a couple minor technical points here, namely that $N_i(0)$ could be negative and that it $N_i(a)<N_i(0)$ for half of the values of $i$. However these don't interfere significantly with the proof, only possibly with the choice of $i_0$ above.

Supposing that $1\le a\le h_c(N_i(0))$ is chosen so that (\ref{eqn:a.choice}) is satisfied, we then apply Lemma \ref{lem:arithprogs} with
\[m=m_i(a,0),~n=n_i(a,0),~r=p_i, ~\text{ and }~ s=q_i.\]
We have that
\begin{align*}
&\log\log \left(\max (3,|ms-nr|)\right)2^{\omega (ms-nr)}\\
&\qquad\qquad =\log\log \left(\max (3,|N_i(a)|)\right)2^{\omega (N_i(a))}\\
&\qquad\qquad =o \left(g_{c'}(q_i)\right)~\text{ as }~i\rar\infty ,
\end{align*}
for any $c'>(1+\epsilon)/c\log 2$. Therefore by the lemma, for all $i$ sufficiently large we can find an integer $1\le b\le g_{c'}(q_i)$ with $(m_i(a,b),n_i(a,b))=1$. Then by (\ref{eqn:n_im_ibound}) we have that
\begin{equation*}
|n_i(a,b)\alpha-m_i(a,b)-\gamma|\le \frac{1+h_c(|N_i(0)|)+g_{c'}(q_i)}{q_i}.
\end{equation*}
Now notice that in the above analysis we can always find a suitable $\epsilon>0$, as long as $c,c'>0$ are chosen so that $cc'>\frac{1}{\log 2}$. Therefore by relabeling we may assume that $c=c'>1/\sqrt{\log 2}$ and that $\epsilon>0$ has been chosen so that $c^2<(1+\epsilon)/\log 2$. Also note that we can find a constant $\rho>0,$ depending only on $\gamma$, such that
\[g_c(|N_i (0)|)\le \rho g_c(q_i) \text{ for all } 0\le c\le 1.\]
Putting all of these observations together, we conclude that for any $1/\sqrt{\log 2}<c\le 1$ there is an integer $i_0=i_0(c)$ such that for all $i\ge i_0,$ there are integers $0\le a,b\le g_c(q_i)$ with $(m_i(a,b),n_i(a,b))=1$ and
\[|n_i(a,b)\alpha-m_i(a,b)-\gamma|\le \frac{3(1+\rho)g_{c}(q_i)}{q_i}.\]
For such a choice we also have that $g_c(q_i)\le \rho'g_c(n_i(a,b))$ and that
\[n_i(a,b)\le \rho'q_ig_c(q_i)\le \rho'q_ig_c(n_i(a,b)),\]
where again $\rho'>0$ is some constant that only depends on $\gamma$ (in the case when $\gamma\not\in(\alpha\Z+\Z)$ we can take $\rho'=1$). Substituting back into our inequality above gives
\[|n_i(a,b)\alpha-m_i(a,b)-\gamma|\le \frac{3\rho'(1+\rho)g_{2c}(n_i(a,b))}{n_i(a,b)}.\]
Since this holds for all $1/\sqrt{\log 2}<c\le 1$ the constant $3\rho'(1+\rho)$ can be ignored for large $i$ (i.e. the inequality is always true for a smaller value of $c$ in this interval but possibly with a larger value of $i_0$), and we therefore obtain the statement of the theorem.

\end{document}